\newcommand{\inner}[1]{\langle #1 \rangle}
\newcommand{\calM}{\mathcal{M}}
\newcommand{\calN}{\mathcal{N}}
\newcommand{\calQ}{\mathcal{Q}}
\newcommand{\calS}{\mathcal{S}}
\newcommand{\calU}{\mathcal{U}}
\newcommand{\bbR}{\mathbb{R}}
\newcommand{\bfA}{\mathbf{A}}
\newcommand{\bfB}{\mathbf{B}}
\newcommand{\bfD}{\mathbf{D}}
\newcommand{\bfQ}{\mathbf{Q}}
\newcommand{\bfS}{\mathbf{S}}
\newcommand{\bfs}{\mathbf{s}}
\newcommand{\bfu}{\mathbf{u}}
\newcommand{\bfv}{\mathbf{v}}
\newcommand{\bfx}{\mathbf{x}}
\newcommand{\diff}{\text{d}}
\newcommand{\mat}[1]{\begin{bmatrix} #1 \end{bmatrix}}
\newtheorem{theorem}{Theorem}[section]
\newtheorem{lemma}[theorem]{Lemma}
\newtheorem{prop}[theorem]{Proposition}
\title{\LARGE \bf
{Intrinsic} Successive Convexification:\\ Trajectory Optimization on Smooth Manifolds
}
\author{Spencer Kraisler$^{1}$, Mehran Mesbahi$^{2}$, and Behcet Açikmese$^{3}$
\thanks{$^{1}$The authors are with the William E. Boeing Department of Aeronautics and Astronautics, University of Washington, Seattle, WA, U.S.A {\tt\small kraisler+mesbahi+behcet@uw.edu}}%
}
\begin{document}

\maketitle
\thispagestyle{empty}
\pagestyle{empty}

\begin{abstract}
A fundamental issue at the core of trajectory optimization on smooth manifolds is handling the implicit manifold constraint within the dynamics. The conventional approach is to enforce the dynamic model as a constraint. However, we show this approach leads to significantly redundant operations, as well as being heavily dependent on the state space representation. Specifically, we propose an \text{\em{intrinsic}} successive convexification methodology for optimal control on smooth manifolds. This so-called iSCvx is then applied to a representative example involving attitude trajectory optimization for a spacecraft subject to non-convex constraints.
\end{abstract}
\section{Introduction}

A fundamental challenge in formulating and solving trajectory optimization problems on manifolds lies in selecting the most appropriate state space representation. For rigid bodies, common representations include unit dual quaternions and homogeneous matrices. The choice of representation directly influences the solution strategy and the performance of algorithms for solving the corresponding non-linear optimal control problem. For example, the widespread adoption of dual quaternions is due to their advantageous properties in this context \cite{lee2015optimal, lee2012dual, szmuk2019successive}. 
In this paper, we investigate a trajectory optimization framework that is invariant to representation, provided the dynamics and constraints depend exclusively on coordinate-free quantities of the system manifolds. We frame this perspective in the context of the Successive Convexification (SCvx) algorithm, a popular method for fast trajectory optimization~\cite{mao2018successive,malyuta2022convex}.

The extrinsic enforcement of dynamics leads to certain redundant operations, particularly in SCvx, by searching over excess dimensions in the convex sub-problem. A systematic analysis of the extrinsic approach is done in \cite{bonalli2019trajectory}. For example, the unit quaternions $\calQ$ is a 3-dimension manifold embedded in a 4-dimensional ambient vector space $\bbR^4$. The fact that the linearized dynamics of the vehicle are parameterized by matrices $A_i^k \in \bbR^{4 \times 4}$ and $B_i^k \in \bbR^{4 \times 3}$ imply that we have redundancy in our parameterization. The same observation goes for the so-called virtual control $v_i \in \bbR^4$ that appears in SCvx. This observation implies that our optimizer requires more variables than necessary, resulting in more memory usage and higher time complexity. 

Furthermore, the virtual control term introduced in the SCvx setup is essentially responsible for achieving two distinct objectives: satisfying the dynamical as well as the state-control constraints, while also trying to meet the implicit manifold constraint (i.e., $q_i^{\top} q_i = 1$). This observation highlights that if there was a way to intrinsically embed the SCvx procedure into the system manifold, then the virtual control would only need to deal with enforcing the dynamical and state-control constraints. There would also be fewer ``search directions'' when solving the resulting local sub-problems. Moreover, since iterates may not remain on the system manifold, additional iterates are required to converge. And most importantly, SCvx is highly dependent on how the dynamics are embedded in the ambient manifold. 

The intrinsic approach to general constrained optimization over manifolds is an established topic with especially recent interest \cite{smith2014optimization, liu2020simple, hu2020brief, bergmann2019intrinsic}. For intrinsic \textit{trajectory} optimization, the authors in \cite{saccon2013optimal, hauser1998trajectory} optimize over the Banach manifold of system trajectories defined on Lie groups.

The contribution of this paper is developing a SCvx-like procedure that is invariant to representation we call intrinsic SCvx (iSCvx). We will show that the proposed approach resolves the redundancy and representation-dependent issues mentioned above. We focus on discrete-time systems with finite horizon. The outline of the paper is as follows. In \S\ref{problem-statment} we present the problem statement, followed by an overview of required background on smooth manifolds and SCvx in \S\ref{background}. The main results of the paper are then discussed in \S\ref{iSCvx} along with theoretical guarantees on the solution of the local sub-problem; this is then followed by an example of the proposed iSCvx algorithm to constrained attitude control in \S\ref{sec:example}. We conclude with a discussion of future directions of this line of work in \S\ref{conclusion}.

\section{Problem Statement} \label{problem-statment}
Let $\calM$ and $\calU$ be Riemannian manifolds and $f \colon\calM \times \calU \to \calM$ represent the system dynamics. The stage and final stage costs are denoted $\phi:\calM \times \calU \to \bbR$ and $h:\calM \to \bbR$. In this direction, define the trajectory cost as,
\begin{equation}\label{eq:traj-cost}
    C(\bfx, \bfu) := \sum_{i=0}^{N-1} \phi(x_i,u_i) + h(x_N);
\end{equation}
here, $N > 0$ designates the time horizon of the problem, $\bfx=(x_0,\ldots,x_N) \in \calM^{N + 1}$ is the state trajectory, and $\bfu = (u_0,\ldots,u_{N-1}) \in \calU^N$ is the control trajectory.

We define the constraints $s:\calM \times \calU \to \bbR^{n_s}$; we assume that all functions defined above are twice continuously differentiable. The trajectory optimization problem of interest 
\newgeometry{top=57pt, left=48pt, right=48pt, bottom=43pt}
is solving
\begin{subequations}\label{eq:OCP}
\begin{align}
    \min_{(\bfx, \bfu)} \ &C(\bfx, \bfu) \\
    \text{s.t. } &x_{i+1} = f(x_i, u_i), \; i=1,2,\ldots, N, \label{eq:dynamics}\\
    &s(x_i, u_i) \leq 0, \; i=1,2,\ldots, N, \label{eq:constraints} \\
    &x_0 \text{ given}.
\end{align}
\end{subequations}
The key idea pursued in this work is solving (\ref{eq:OCP}) in an ``intrinsic'' manner--one that is invariant with respect to how $\calM$ has been embedded in the ambient vector space.

\section{Background} \label{background}

A smooth manifold $\calM$ is a topological space that locally behaves like a vector space \cite{boumal2023introduction,lee2012smooth, lee2018introduction}. Euclidean submanifolds are subsets of $\calM \subset \bbR^{d}$ with no corners, cusps, or ``non-smooth'' artifacts, such as the unit quaternions $\calQ := \{q \in \bbR^4:q^{\top} q=1\} \subset \bbR^4$; see \S\ref{sec:example}.

A smooth curve is a smooth function $\gamma:\bbR \to \calM$. The tangent space $T_x \calM$ at $x$ is the set of the tangent vectors $\dot{\gamma}(0)$ of all smooth curves $\gamma(\cdot)$ with $\gamma(0)=x$. The tangent space is a vector space whose dimension equals the manifold dimension. For example, $T_q\calQ := \{v \in \bbR^4:v^{\top}q=0\}$ has dimension 3. The disjoint union of tangent spaces $T \calM$ is called the tangent bundle. 

A vector field is a smooth function $V:\calM \to T\calM$ with $V(x) \in T_x\calM$. Let $U \subset \calM$. A local frame $(E_i:U \to T\calM)$ is an ordered set of $\dim \calM$ linearly independent vector fields; global frames have $U=\calM$.

Let $f \colon \calM \to \calN$ be smooth. The differential $\diff f_x:T_x\calM \to T_{f(x)}\calN$ of $f$ at $x$ along $v \in T_x\calM$ is the linear mapping 
\begin{equation}\label{eq:diff}
    \diff f_x(v) := \frac{d}{dt}\Big|_{t=0} f \circ \gamma,
\end{equation} where $\gamma(\cdot)$ is any smooth curve satisfying $(\gamma(0), \dot{\gamma}(0))=(x,v)$. The differential is the canonical way of defining a directional derivative. The following example makes use of the unit quaternions. Fix $y_b \in \bbR^3$. Define
\begin{equation*}\label{eq:rotation}
    f(q) := q \cdot y_b \cdot q^{-1}, 
\end{equation*} that rotates $y_b$ via the rotation represented by $q$. Here, ``$\cdot$'' denotes quaternion multiplication, and we are identifying $\bbR^3$ with the pure quaternions. From the above, we have
\begin{equation*}
    \diff f_q(v) = q \cdot [y_b, q^{-1} \cdot v] \cdot q^{-1},
\end{equation*}  where $[a,b] := a \cdot b - b \cdot a$.

A retraction is a smooth mapping $R:\calS \subset T\calM \to \calM$ where $\calS$ is open, $(x,0_x) \in \calS$ for all $x$, and the curve $\gamma(t):=R_x(tv) \equiv R(x,tv)$ satisfies $(\gamma(0),\dot{\gamma}(0))=(x,v)$ for each $(x,v) \in \calS$ \cite{boumal2019global,truong2020unconstrained, lezcano2019trivializations}. For vector spaces, the canonical retraction is $R_x(v)=x + v$. For a product manifold $\calM^k$, the induced product retraction is the retraction applied element-wise: $R_{\bfx}(\bfv) = (R_{x_1}(v_1),\ldots,R_{x_k}(v_k))$. 

By the inverse function theorem, there is a neighborhood $U_x$ around each $0 \in T_x \calM$ such that $R_x$ admits an inverse: $R_x^{-1}:U_x \subset T_x \calM \to \calM$.

The canonical retraction for unit quaternions is given by 
\begin{subequations}
    \begin{align}
        R_q(v) & := q \cdot \exp(\omega), \label{eq:lie-retraction}\\
        \exp(\omega)& := (\cos\|\omega\|_2, \text{sinc}(\omega) \omega), \label{eq:lie-exp}
    \end{align}
\end{subequations} where $\text{sinc}(\omega) := \sin \|\omega\|_2/\|\omega\|_2$. Here, we are expressing quaternions as $q=(q_0,q_v)$, with $q_0 \in \bbR$ the real part and $q_v \in \bbR^3$ the pure part. Also, $v = q \cdot \omega \in T_q \calQ$ and $\exp:\mathfrak{q} \to \calQ$ is the quaternion exponential. The inverse of this retraction becomes,
\begin{subequations}
\begin{align*}
    R_q^{-1}(p) &:= q \cdot \log(q \cdot p^{-1}),\\
    \log(q) &:= \frac{\cos^{-1}(q_0)}{\|q_v\|_2}q_v,
\end{align*} 
\end{subequations} where $\log \colon \calQ \to \mathfrak{q}$ denotes the quaternion logarithm. 

\subsection{Successive Convexification (SCvx)}
SCvx is a primal solver of non-linear optimal control problems. At its $k$th iteration, we begin with a possibly infeasible trajectory $(\bfx^k, \bfu^k)$. Next, the dynamics and constraints of Problem \ref{eq:OCP} are linearized about this trajectory, resulting in linear constraints and dynamics of the form,
\begin{align*}
    x_{i+1}^k + \eta_{i+1} = f(x_i^k, u_i^k) + A_i^k \eta_i + B_i^k \xi_i, \\
    s(x_i^k,u_i^k) + S_i^k \eta_i + Q_i^k \xi_i \leq 0.
\end{align*} The convex sub-problem is then solved, and the resulting optimal perturbations are added to the current trajectory to obtain a more optimal and feasible trajectory. The process then repeats. To ensure that the convex sub-problem is feasible, relaxation parameters are added, such as the virtual control parameters to ensure reachability of the linearized dynamics and the virtual buffers for domain feasibility. A trust region radius constant is also added to bound the control perturbation. Penalties are appended to the sub-problem cost to ensure these relaxation parameters are small.

\section{Intrinsic SCvx} \label{iSCvx}
In this section, we describe the main mechanisms used in iSCvx. First, we show how to linearize dynamics in an intrinsic way in \S\ref{subsec:linearization}. Next, we show how to handle the virtual control and buffer zone terms, along with the penalty in \S\ref{subsec:slack-penalty}. Thereafter, we show how to compute an intrinsic 2nd-order approximation of the trajectory cost in \S\ref{subsec:linear-cost}. Then, in \S{\ref{subsec:geo-local-subproblem}}, we introduce the geodesic local sub-problem and how to numerically solve it. Last, we present the iSCvx algorithm in \S\ref{subsec:algo}.

\subsection{Linearizing Dynamics on a Smooth Manifold}\label{subsec:linearization}

The key difference between iSCvx and SCvx is we linearize dynamics using the intrinsic differential (\ref{eq:diff}) rather than the ordinary Jacobian operator. If $\calM \subset \bbR^{d_x}$ and $\calU \subset \bbR^{d_u}$, the resulting system matrices in the linearized dynamics are $d_x \times d_x$ and $d_x \times d_u$. Using the differential, the resulting system matrices will instead have the smaller dimensions $n \times n$ and $n \times m$, where $n=\dim\calM$ and $m=\dim \calU$.  

At the $k$th iteration of iSCvx, we have a possibly infeasible trajectory $(\bfx^k, \bfu^k) \subset \calM^{N + 1} \times \calU^N$. We will refer to the retractions of $\calM$ and $\calU$ both as $R$ for notational simplicity. 

We perturb $(\bfx^k, \bfu^k)$ with control perturbation $\bm{\xi}$. Next, set $x_0:=x^k_0$ and $x_{i+1} = f(x_i, R_{u_i^k}(\xi_i))$. Define $\bm{\eta}$ as the solution to $R_{x_i^k}(\eta_i)=x_i$. If we assume that $(\bfx, \bfu)$ satisfies (\ref{eq:constraints}), then we can write
\begin{subequations}\label{eq:perturbations}
\begin{align}
    &R_{x_{i+1}^k}(\eta_{i+1}) = f(R_{x_i^k}(\eta_i), R_{u_i^k}(\xi_i)), \label{eq:perturbed-dynamics} \\
    &s(R_{x_i^k}(\eta_i), R_{u_i^k}(\xi_i)) \leq 0 .\label{eq:perturbed-constraints}
\end{align} 
\end{subequations}

Next, we will take the 1st-order approximation of (\ref{eq:perturbations}). We require the following linear operators, 
\begin{subequations}\label{eq:differentials}
    \begin{align}
        \bfA_i^k &:= \diff_x f_{(x_i^k, u_i^k)}: T_{x_i^k} \calM \to T_{f(x_i^k, u_i^k)}\calM , \\ 
        \bfB_i^k &:= \diff_u f_{(x_i^k, u_i^k)}: T_{u_i^k}\calU \to T_{f(x_i^k, u_i^k)} \calM, \\
        \bfS_i^k &:= \diff_x s_{(x_i^k, u_i^k)}: T_{x_i^k} \calM \to \bbR^{n_s} , \\
        \bfQ_i^k &:= \diff_u s_{(x_i^k, u_i^k)}:  T_{u_i^k}\calU \to \bbR^{n_s}.
    \end{align}
\end{subequations}

\begin{prop}
    The 1st-order approximation of (\ref{eq:perturbations}) is
    \begin{subequations}\label{eq:geo-lin}
    \begin{align}
        &\eta_{i+1} = R^{-1}_{x_{i+1}^k}(z_i^k) + \bfD_i^k \circ (\bfA_i^k(\eta_i) + \bfB_i^k(\xi_i)), \label{eq:geo-lin-dynamics} \\
        &s(x_i^k, u_i^k) + \bfS_i^k(\eta_i) + \bfQ_i^k(\xi_i)\leq 0, \label{eq:geo-lin-constraints}
    \end{align}
    \end{subequations} where $z_i^k:= f(x_i^k,u_i^k)$ and
    \begin{equation}\label{eq:curvature-operator}
        \bfD_i^k := \diff (R^{-1}_{x_{i+1}^k})|_{z_i^k}:T_{z_i^k} \calM \to T_{x_{i+1}^k} \calM.
    \end{equation} 
\end{prop}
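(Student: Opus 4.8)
The plan is to read ``first-order approximation of (\ref{eq:perturbations})'' as: Taylor-expand each side of (\ref{eq:perturbed-dynamics}) and (\ref{eq:perturbed-constraints}), viewed as functions of the perturbation variables $(\eta_i,\xi_i)$, about the origin $(0,0)$, retaining only the constant and linear terms. The only genuinely non-routine point is that the left side of (\ref{eq:perturbed-dynamics}) is a \emph{point} of $\calM$, not a vector, so one cannot simply append a linear correction; the device is to compose with the local inverse retraction $R_{x_{i+1}^k}^{-1}$ (which exists on a neighborhood of $z_i^k$ by the inverse function theorem, provided the dynamic infeasibility of the current iterate is small enough) so as to land back in the fixed vector space $T_{x_{i+1}^k}\calM$, where Taylor expansion makes sense. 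The single structural fact used throughout is that a retraction satisfies $\diff (R_x)|_{0_x} = \mathrm{id}_{T_x\calM}$, obtained by differentiating $t \mapsto R_x(tv)$ at $t=0$.

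\emph{Dynamics.} First I would introduce the lifted dynamics $g_i^k(\eta,\xi) := f\!\left(R_{x_i^k}(\eta),\, R_{u_i^k}(\xi)\right)$, a smooth map from an open subset of $T_{x_i^k}\calM \times T_{u_i^k}\calU$ into $\calM$, and note $g_i^k(0,0) = f(x_i^k,u_i^k) = z_i^k$. The chain rule together with $\diff(R_{x_i^k})|_0 = \mathrm{id}$ and $\diff(R_{u_i^k})|_0 = \mathrm{id}$ gives the partial differentials $\diff_\eta g_i^k|_{(0,0)} = \diff_x f_{(x_i^k,u_i^k)} = \bfA_i^k$ and $\diff_\xi g_i^k|_{(0,0)} = \diff_u f_{(x_i^k,u_i^k)} = \bfB_i^k$. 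Since $\eta_{i+1} = R_{x_{i+1}^k}^{-1}\!\big(g_i^k(\eta_i,\xi_i)\big)$ is a composition of smooth maps valued in the vector space $T_{x_{i+1}^k}\calM$, its first-order expansion about $(0,0)$ is the value at the origin, $R_{x_{i+1}^k}^{-1}(z_i^k)$, plus $\diff(R_{x_{i+1}^k}^{-1})|_{z_i^k}$ applied to $\diff g_i^k|_{(0,0)}(\eta_i,\xi_i) = \bfA_i^k(\eta_i) + \bfB_i^k(\xi_i)$. Identifying $\diff(R_{x_{i+1}^k}^{-1})|_{z_i^k}$ with $\bfD_i^k$ from (\ref{eq:curvature-operator}) produces (\ref{eq:geo-lin-dynamics}).

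\emph{Constraints.} The map $(\eta,\xi) \mapsto s\!\left(R_{x_i^k}(\eta),\, R_{u_i^k}(\xi)\right)$ is valued in $\bbR^{n_s}$, so an ordinary vector-valued first-order Taylor expansion applies directly; by the chain rule and $\diff(R_{x_i^k})|_0 = \mathrm{id}$, $\diff(R_{u_i^k})|_0 = \mathrm{id}$, its linearization about the origin is $s(x_i^k,u_i^k) + \bfS_i^k(\eta_i) + \bfQ_i^k(\xi_i)$, and requiring this expression to be $\le 0$ gives (\ref{eq:geo-lin-constraints}).

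The main obstacle is not computational but one of well-posedness and exposition: one must justify that $R_{x_{i+1}^k}^{-1}$ may legitimately be applied at $z_i^k$, i.e.\ that $z_i^k$ lies in the neighborhood $U_{x_{i+1}^k}$ on which the inverse retraction is defined — equivalently, that the current trajectory is ``dynamically feasible enough'' so that $z_i^k = f(x_i^k,u_i^k)$ is within the injectivity radius of $R_{x_{i+1}^k}$. Once this is recorded, the remainder is the chain rule applied to compositions with retractions, using only the identity $\diff R_x|_0 = \mathrm{id}$.
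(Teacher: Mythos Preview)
Your proof is correct and follows essentially the same route as the paper's: compose the perturbed dynamics with $R_{x_{i+1}^k}^{-1}$ to obtain a map between vector spaces, then apply the chain rule together with $\diff R_x|_0=\mathrm{id}$ to read off the constant term $R_{x_{i+1}^k}^{-1}(z_i^k)$ and the linear part $\bfD_i^k\circ(\bfA_i^k+\bfB_i^k)$; the constraint linearization is handled identically. Your explicit remark on the well-posedness of $R_{x_{i+1}^k}^{-1}$ at $z_i^k$ is a useful caveat the paper leaves implicit.
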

\begin{proof}
    From (\ref{eq:perturbed-dynamics}), we can write
    \begin{equation*}
        \eta_{i+1} = R_{x^k_{i+1}}^{-1}(f(R_{x_i^k}(\eta_i), R_{u_i^k}(\xi_i))).
    \end{equation*} This expression is a nonlinear mapping from $(\eta_i,\xi_i) \in T_{(x^k_i, u^k_i)} (\calM \times \calU)$ to $\eta_{i+1} \in T_{x^k_{i+1}} \calM$. So, we can compute its 1st-order approximation about $(0_{x^k_i},0_{u^k_i})$. Using the fact that $\diff R_{x_i^k}|_0=\text{id}$ and \begin{align*}
        \diff (R_{x^k_{i+1}}^{-1} \circ f \circ (R_{x_i^k}, R_{u_i^k})) = \diff R_{x^k_{i+1}}^{-1} \circ (\diff_x f + \diff_u f),
    \end{align*} through the chain rule, we obtain 
    \begin{align*}
        \eta_{i+1} = R_{x^k_{i+1}}^{-1}(z_i^k) + \tilde{\bfA}_i^k(\eta_i) + \tilde{\bfB}_i^k(\xi_i) + o(\|\eta_i\|_{x_i^k}, \|\xi_i\|_{u_i^k}).
    \end{align*} Here, $\tilde{\bfA}_i^k := \bfD_i^k \circ \bfA_i^k$ and $\tilde{\bfB}_i^k := \bfD_i^k \circ \bfB_i^k$. This gives us (\ref{eq:geo-lin-dynamics}); the inequality (\ref{eq:geo-lin-constraints}) follows by a similar procedure. 
\end{proof}

We remark that (\ref{eq:geo-lin}) describe linear constraints. Also, in the case $(\bfx, \bfu)$ is feasible, we have $R^{-1}_{x_{i+1}^k}(z_i^k) = 0$ and $\bfD_i^k=\text{id}$. We also have $\bfD_i^k = \text{id}$ in the scenario $\calM$ and $\calU$ are vector spaces.

\subsection{Slack Variables and Penalty}\label{subsec:slack-penalty}
Like SCvx, we use slack variables $s_i' \geq 0$ and $v_i \in T_{x_{i+1}^k}\calM$ to ensure that the sub-problem is feasible:
\begin{align*}
    s(x_i^k, u_i^k) + \bfS_i^k(\eta_i) + \bfQ_i^k(\xi_i) - s_i' \leq 0 \\
    \eta_{i+1} = R^{-1}_{x_{i+1}^k}(z_i^k) + \tilde{\bfA}_i^k(\eta_i) + \tilde{\bfB}_i^k(\xi_i) + v_i.
\end{align*}

For the sub-problem cost (\S\ref{subsec:linear-cost}), we require an exact penalty term in order to minimize usage of these slack variables. At time $k$, consider some function $P: T\calM \times \bbR^{n_s} \to \bbR$ that is defined only in coordinate-free quantities. From here, we can define the penalized cost as
\begin{equation}\label{eq:pen-cost}
    J(\bfx, \bfu) := C(\bfx, \bfu) + \sum_{i=0}^{N - 1} \lambda_i P (R^{-1}_{x_{i+1}^k}(z_i^k), s(x_i^k, u_i^k)).
\end{equation} 

The penalty employed in \S\ref{sec:example} for the unit quaternions is
\begin{equation}\label{eq:quat-penalty}
    P(v,s') = \|q^{-1} \cdot v\|_1 + \max(0,s'),
\end{equation} where $s' \in \bbR$ and $v \in T_q\calQ$.  

\subsection{Linearized Trajectory Cost}\label{subsec:linear-cost}
We will describe how to construct the linearized trajectory cost for the sub-problem. We will require that our stage and final stage costs are geodesically convex with respect to the Riemannian structures of $\calM$ and $\calU$. This will imply that the 2nd-order approximation is a positive semi-definite quadratic function, thereby making the sub-problem convex quadratic.  

A Riemannian metric is a smooth assignment of an inner product $\inner{.,.}_x: T_x \calM \times T_x \calM \to \bbR$ to each point $x \in \calM$. A smooth manifold paired with a Riemannian metric is called a Riemannian manifold. Riemannian metrics allow us to define generalized notions of the gradient.

Euclidean submanifolds equipped with $\inner{v,w}_x := v^{\top} w$ are labeled isometric. In the context of optimization, the ``right'' Riemannian metric can speed up the convergence rate of the corresponding Riemannian gradient method by orders of magnitude \cite{kraisler2024output}. 

The Riemannian gradient $\nabla f(x) \in T_x\calM$ of $f\colon \calM \to \bbR$ is the vector field that satisfies $\inner{\nabla f(x), v}_x = \diff f_x(v)$ at all $(x,v) \in T\calM$. While the Riemannian Hessian $\nabla^2 f(x):T_x\calM \to T_x\calM$ requires more machinery to define, for isometric Euclidean submanifolds it becomes
\begin{equation*}
    \nabla^2 f(x)[v] := \text{Proj}_{T_x\calM}\left(\overline{\nabla}^2 f(x)v \right).
\end{equation*} Here, $\overline{\nabla}^2 f(x) \in \bbR^{d \times d}$ is the ordinary Hessian matrix. We also define $\nabla^2 f(x)[v,w] := \inner{\nabla^2 f(x)[v], w}_x$.

We refer to~\cite{boumal2023introduction} on definitions of geodesic convexity for sets and functions on manifolds. There is one property of geodesically convex functions $f \colon U \subset \calM \to \bbR$ we focus on: $f$ is geodesically convex if and only if $\nabla^2 f\succeq0$. Then, the 2nd-order approximation about any $x \in U$ is a positive semi-definite quadratic:
\begin{equation*}
    \hat{f}_x(v):=f(x)+ \inner{v, \nabla f(x)}_x + \frac{1}{2}\nabla^2 f(x)[v,w].
\end{equation*} 

Suppose the stage costs $\phi(\cdot, \cdot),h(\cdot)$ are geodesically convex and defined on strongly geodesically convex domains $D \subset \calM$ and $U \subset \calU$. Then the trajectory cost (\ref{eq:traj-cost}) is strongly geodesically convex over $D^{N+1} \times U^N$. If we assume $\nabla_{xu} \phi=0$, then its 2nd-order approximation about any trajectory $(\bfx, \bfu)$ will be a positive semi-definite quadratic:
\begin{subequations}\label{eq:lin-cost}
\begin{align}
    \hat{C}_{(\bfx, \bfu)}(\bm{\eta},\bm{\xi}) = \sum_{i=1}^{N - 1} (\phi(x_i, u_i) + \inner{\nabla_x \phi(x_i, u_i), \eta_i}_{x_i} \\ 
    +\inner{\nabla_u \phi(x_i, u_i), \xi_i}_{u_i} + \frac{1}{2} \nabla_{xx}^2 \phi(x_i,u_i)[\eta_i, \eta_i] \\ 
    + \frac{1}{2} \nabla_{uu}^2 \phi(x_i,u_i)[\xi_i, \xi_i]) + h(x_N) \\ 
    + \inner{\nabla h(x_N), \eta_N}_{x_N} + \frac{1}{2}\nabla^2 h(x_N)[\eta_N, \eta_N] .
\end{align}
\end{subequations}

We now have the tools to define the linearized cost:
\begin{equation}\label{eq:pen-lin-cost}
    L^k(\bm{\eta}, \bm{\xi}; \bfv, \bfs') := \hat{C}_{(\bfx^k, \bfu^k)}(\bm{\eta}, \bm{\xi}) + \sum_{i=0}^{N - 1} \lambda_i P(v_i, s_i') .
\end{equation}

\subsection{Geodesic Local Sub-problem}\label{subsec:geo-local-subproblem}
The geodesic local sub-problem, in its abstract form, is
\begin{subequations}\label{eq:LOCP}
    \begin{align}
        \min_{\bm{\eta}, \bm{\xi}, \bfv, \bfs'} \ &L^k(\bm{\eta}, \bm{\xi}; \bfv, \bfs')\\
        \text{s.t. } &\eta_{i+1} = R_{x_{i+1}^k}^{-1}(z_i^k) + \tilde{\bfA}_i(\eta_i) + \tilde{\bfB}_i(\xi_i) + v_i, \\ 
        &s(x_i^k, u_i^k) + \bfS_i^k(\eta_i) + \bfQ_i^k(\xi_i) - s_i' \leq 0, \\
        &s_i' \geq 0,  \ \|\xi_i\|_{u_i^k} \leq r^k.
    \end{align}
\end{subequations}

\begin{figure}
      \centering
      \begin{overpic}[width=.85\columnwidth]{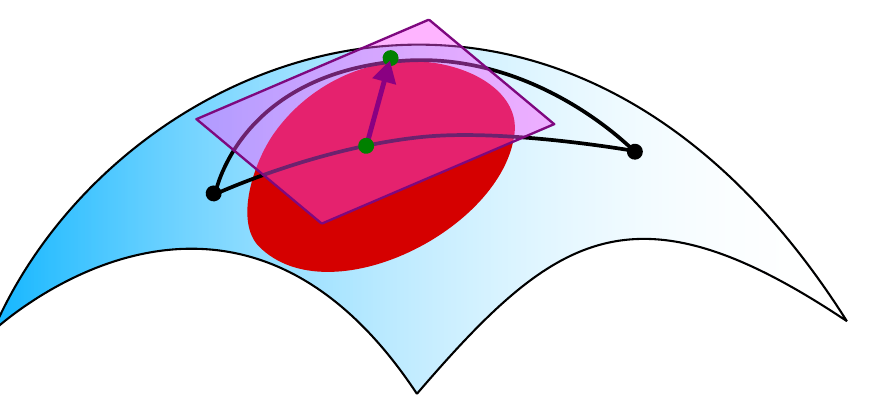}
          \put(30,28){\color{blue} $\calM$}
          \put(130,78){$\bfx^{k+1}$}
          \put(125,55){$\bfx^k$}
          \put(85,55){\color{green!40!black} $x^k_i$}
          \put(95,75){\color{green!40!black} $x^{k+1}_i$}
          \put(78,72){\color{violet} $\eta_i$}
        \put(33,80){\color{violet}$T_{x^k_i}\calM$}
          \put(95,30){\color{red} $D$} 
      \end{overpic}
      \caption{Visualization of perturbing a trajectory along a tangent space.}
      \label{fig:perturb}
  \end{figure}

\begin{lemma}
    Suppose the stage costs $\phi(\cdot,\cdot),h(\cdot)$ are twice continuously differentiable and geodesically convex over the strongly geodesically convex sets $U \times V$ and $V$, respectively. Then a solution exists for~(\ref{eq:LOCP}).
\end{lemma}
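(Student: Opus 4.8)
The plan is to prove existence by the direct method: exhibit~(\ref{eq:LOCP}) as the minimization of a continuous function over a nonempty closed subset of a finite-dimensional vector space, and then show that its sublevel sets meeting that subset are bounded, hence compact, so that a minimizer exists by Weierstrass' theorem.

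First I would fix the ambient space. Since each $T_{x_i^k}\calM$ and $T_{u_i^k}\calU$ is a finite-dimensional inner product space, the decision variables $(\bm{\eta},\bm{\xi},\bfv,\bfs')$ range over the finite-dimensional vector space $\calV := \big(\prod_i T_{x_i^k}\calM\big)\times\big(\prod_i T_{u_i^k}\calU\big)\times\big(\prod_i T_{x_{i+1}^k}\calM\big)\times\bbR^{N n_s}$. On $\calV$ the linearized trajectory cost $\hat{C}_{(\bfx^k,\bfu^k)}$ from~(\ref{eq:lin-cost}) is polynomial, hence continuous, and the penalty $\sum_i\lambda_i P(v_i,s_i')$ is continuous because $P$ is (in the example~(\ref{eq:quat-penalty}) it is a $1$-norm precomposed with a linear isomorphism plus a $\max$, both continuous); thus $L^k$ is continuous. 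The feasible set $\calF\subset\calV$ is closed: the dynamics equalities are affine in $(\bm{\eta},\bm{\xi},\bfv)$ because $\tilde{\bfA}_i,\tilde{\bfB}_i$ are linear and $R^{-1}_{x_{i+1}^k}(z_i^k)$ is a fixed vector, the relaxed inequalities are affine in $(\bm{\eta},\bm{\xi},\bfs')$, and $s_i'\ge 0$, $\|\xi_i\|_{u_i^k}\le r^k$ are closed. And $\calF$ is nonempty: take $\bm{\eta}=0$ and $\bm{\xi}=0$; then the dynamics equality forces $v_i=-R^{-1}_{x_{i+1}^k}(z_i^k)$ (well defined since $z_i^k$ lies in the neighborhood on which $R^{-1}_{x_{i+1}^k}$ is defined), the trust-region constraint holds trivially, and $s_i':=\max\{0,s(x_i^k,u_i^k)\}$ (componentwise) satisfies $s_i'\ge 0$ and the relaxed constraint.

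The crux is boundedness of $\calF\cap\{L^k\le c\}$. The trust region bounds $\bm{\xi}$; and with $\eta_0=0$, iterating the dynamics equality writes each $\eta_i$ as an affine function of $(\xi_0,\dots,\xi_{i-1},v_0,\dots,v_{i-1})$, so it suffices to bound $\bfv$ and $\bfs'$. Since $\phi(\cdot,\cdot)$ and $h(\cdot)$ are geodesically convex, their Riemannian Hessians are positive semidefinite, so (using $\nabla_{xu}\phi=0$) $\hat{C}_{(\bfx^k,\bfu^k)}$ is a convex quadratic and is therefore bounded below by one of its supporting affine functions; combining this with $\bm{\xi}$ bounded and $\bm{\eta}$ affine in $(\bm{\xi},\bfv)$ gives $\hat{C}_{(\bfx^k,\bfu^k)}(\bm{\eta},\bm{\xi})\ge a(\bfv)$ on $\calF$ for some affine $a$. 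Hence on $\calF\cap\{L^k\le c\}$ we have $\sum_i\lambda_i P(v_i,s_i')\le c-a(\bfv)$, while $P$ is coercive on $\{s_i'\ge 0\}$ (in the example it dominates $\|v_i\|$ and $s_i'$ up to positive constants). Taking the penalty weights $\lambda_i$ large enough -- the standard exact-penalty requirement -- the left side grows faster than the affine right side, forcing $\bfv$ and $\bfs'$, and hence $\bm{\eta}$, to be bounded. Thus $\calF\cap\{L^k\le c\}$ is closed and bounded, hence compact; choosing $c$ above the value of $L^k$ at the feasible point above makes it nonempty, and continuity of $L^k$ yields a minimizer.

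I expect the coercivity step to be the main obstacle: one must control $\bfv$ in spite of its coupling to $\bm{\eta}$ through the linearized dynamics, and ensure the penalty's linear growth dominates the (at worst linear) unbounded-below directions of the quadratic cost. If the stage costs are instead taken strongly geodesically convex, so that their Riemannian Hessians are positive definite, then $\hat{C}_{(\bfx^k,\bfu^k)}$ is itself coercive in $(\bm{\eta},\bm{\xi})$ and this step is immediate.
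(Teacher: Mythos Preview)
Your argument is substantially more detailed than the paper's. The paper's proof is three sentences: it notes that the virtual control $v_i$ makes every state reachable under the linearized dynamics, that the virtual buffers $s_i'$ render the relaxed inequality constraints satisfiable, concludes that the feasible set is nonempty, and then simply asserts ``Hence, an optimal solution exists.'' No boundedness, compactness, or coercivity consideration appears; only feasibility is actually established.

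Your development via the direct method is therefore a different route, and your feasibility, closedness, and continuity steps are correct and cleanly argued. You also put your finger on the one genuine obstruction: the coercivity step. The hypothesis you invoke there, ``taking the penalty weights $\lambda_i$ large enough,'' is not among the assumptions of the lemma, and without it the conclusion can fail. Indeed, with $\hat{C}$ merely positive semidefinite and $\bm{\eta}$ affine in $\bfv$ on $\calF$, the restriction $\hat{C}|_{\calF}$ may decrease linearly along some ray in $\bfv$, while $P$ grows only linearly; for small $\lambda_i$ one can then have $L^k\to-\infty$ along a feasible direction and no minimizer exists. So this is a real gap in the lemma as stated, not merely in your proof, and the paper's argument does not close it either. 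Your proposed fixes (either assume $\lambda_i$ sufficiently large, or strengthen to strong geodesic convexity so that $\hat{C}$ itself is coercive in $(\bm{\eta},\bm{\xi})$) are exactly the natural ways to repair the statement.
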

\begin{proof}
    Recall that the sub-problem is feasible if the feasibility set is non-empty. Due to the virtual control term $v_i$, any state in the feasible region of the convex sub-problem is reachable in finite time. Furthermore, the virtual buffer zone variables $s_i'$ ensure that the feasible set defined by the linearized state and control constraints is non-empty. Hence, an optimal solution exists. 
\end{proof}

Once solved, the trajectories are perturbed as follows: $\bfx^{k+1} = R_{\bfx^k}(\bm{\eta})$ and $\bfu^{k+1} = R_{\bfu^k}(\bm{\eta})$ (Figure \ref{fig:perturb}). In order to solve (\ref{eq:LOCP}), we must pick frames and compute the components of the tangent vectors and linear operators. We will show how to do this in a general set-up. 

Let $\calN$ be a Riemannian manifold of dimension $n$. Let $(E_i)$ be a local frame about $x \in \calN$. The components of a tangent vector $v \in T_x\calN$ is the unique coordinate vector $[v]:=(v^1,\ldots,v^n) \in \bbR^n$ for which $v = \sum_{i=1}^n v^i E_i(x)$. When $(E_i)$ is orthogonal, we can write $v^i = \inner{v, E_i(x)}_x$. Now, let $\calN'$ be another Riemannian manifold of dimension $n'$ and pick $x' \in \calN'$. Let $\bfA:T_x \calN \to T_{x'} \calN'$ be linear. Its components with respect to these frames will be a matrix $[\bfA] \in \bbR^{n' \times n}$. Set $A_j := \bfA(E_j(x)) \in T_{x'}\calM'$. Then $[A_j] \in \bbR^{n'}$ will be the $j$th column of $[\bfA]$. This also means $A_j = \sum_{i=1}^{n'}A_{ij}E_i'(x)$. If our frames are orthogonal, we have $A_{ij} = \inner{A_j, E'_i(x')}_{x'}$. Last, the components of the Riemannian metric will be a positive definite matrix $G_{ij}(x) = \inner{E_i(x),E_j(x)}_x$. For $w \in T_x\calM$, we can write $\inner{v,w}_x = [v]^\top [G(x)] [w]$. When $(E_i)$ is orthonormal, we have $[G]=I_n$. 

Using this methodology, we can see how to compute the necessary components in (\ref{eq:lin-cost}-\ref{eq:LOCP}) in order to realize (\ref{eq:LOCP}) and then solve using a convex program solver. For instance, $\nabla_{xx}^2 \phi[\eta,\eta] = [\eta]^\top[G][\nabla^2_{xx} \phi][\eta]$. For $\calU=\bbR^m$, an orthonormal global frame is the standard basis vectors. For $\calM=\calQ$, an orthonormal global frame is $E_i(q) := q \cdot e_i$, where $e_i \in \mathfrak{q} \cong \bbR^3$ is the $i$th standard basis vector.

\subsection{iSCvx Procedure}\label{subsec:algo}

We now introduce our algorithm, detailed in Algorithm \ref{alg:iscvx}. A few remarks. First, $J$ and $L$ are (\ref{eq:pen-lin-cost}) and (\ref{eq:lin-cost}), respectively. Second, rationale behind the algorithm parameters is laid out in \cite[Alg. 2.1]{mao2018successive}. Most importantly, since the algorithm, in particular Problem \ref{eq:LOCP}, relies solely on intrinsic quantities of the system manifolds, then the chosen coordinate frame has no impact on the computational outcome. Even more broadly, the outcome is invariant to diffeomorphism, provided the quantities used in the system and Problem \ref{eq:OCP} are related by a pullback with the corresponding quantities with the other representation. 

\begin{algorithm}[th]
\caption{iSCvx Algorithm}
\label{alg:iscvx}
\begin{algorithmic}[1]
    \State \textbf{Input:} Choose initial trajectory $(\bfx^1, \bfu^1) \in \calM^{N + 1} \times \calU^{N}$, $k \gets 1$. Select trust region $r^1 > 0$, penalty weight $\lambda > 0$, and parameters $0 < \rho_0 < \rho_1 < \rho_2 < 1$, $r_1 > 0$, and $\alpha > 1$, $\beta > 1$
    \While{$\Delta J^k > \epsilon_{tol}$ \textbf{do}}
        \State \textbf{step 1} Solve Problem (\ref{eq:LOCP}) to get $(\bm{\eta}, \bm{\xi}; \bfv, \bfs')$.
        \State \textbf{step 2} Compute 
        \begin{align*}
            \Delta J^k &= J(\bfx^k, \bfu^k) - J(R_{\bfx^k}(\bm{\eta}), R_{\bfu^k}(\bm{\xi})) \\
            \Delta L^k &= J(\bfx^k, \bfu^k) - L^k(\bm{\eta}, \bm{\xi}; \bfv, \bfs')
        \end{align*}
        \If{$\Delta J^k = 0$}
            \State \textbf{return} $(\bfx^k, \bfu^k)$;
        \Else
            \State Compute $\rho^k = \Delta J^k / \Delta L^k$.
        \EndIf
        \State \textbf{step 3} 
        \If{$\rho^k < \rho_0$}
            \State $r^k \gets r^k / \alpha$, go back to \textbf{step 1};
        \Else
            \State Update $\bfx^{k+1} \gets R_{\bfx^k}(\bm{\eta})$, $\bfu^{k+1} \gets R_{\bfu^k}(\bm{\xi})$
            \State Update $r^{k+1} \gets \begin{cases} 
                r^k / \alpha, & \text{if } \rho^k < \rho_1; \\
                r^k, & \text{if } \rho_1 \leq \rho^k < \rho_2; \\
                \beta r^k, & \text{if } \rho_2 \leq \rho^k.
            \end{cases}$
        \EndIf
        \State Update $r^{k+1} \gets \max\{r^{k+1}, r_1\}$, $k \gets k + 1$, go back to \textbf{step 1}.
    \EndWhile
    \State \textbf{return} $(\bfx^{k+1}, \bfu^{k+1})$.
\end{algorithmic}
\end{algorithm}

\section{Experiment: Constrained attitude guidance iSCvx} \label{sec:example}

\subsection{Set-up}
In this section, we will compare iSCvx to its ``Euclidean'' realization for constrained attitude guidance. Our presentation leverages the closed-form expressions for the quaternion exponential and logarithm, along with their derivatives \cite{sola2017quaternion, rossmann2006lie, hairer2006geometric}. 
$\diff \exp_\eta \colon \bbR^3 \to T_{\exp(\eta)} \calQ \subset \bbR^4$ and  $\diff \log_q : T_q \calQ \subset \bbR^4 \to \bbR^3$. These have the following closed form expressions:
\small
\begin{align*}
    \diff \exp_\omega(\eta) &= \mat{-\text{sinc}(\omega) \omega^{\top} \\ \text{sinc}(\omega) I_3  + \nabla \text{sinc}(\omega) \omega^{\top}} \eta \\
    \diff \log_q(v) &= \mat{\frac{-1}{\|q_v\|_2 \sqrt{1 - q_0^2}}q_v & \frac{\cos^{-1}(q_0)}{\|q_v\|_2}(I_3 - \frac{1}{\|q_v\|_2^2} q_vq_v^{\top} )} v \\ 
    \nabla \text{sinc}(\omega) &= \left(\frac{\cos\|\omega\|_2}{\|\omega\|_2^2} - \frac{\sin\|\omega\|_2}{\|\omega\|_2^3}\right)\omega. 
\end{align*} \normalsize Here, $(q,v) \in T\calQ$ and $\eta, \omega \in \bbR^3$. 

We fix an initial $q_0$ and desired $q_d$ attitude. Our retraction is (\ref{eq:lie-retraction}). The system is the discretized rotation kinematic equation,
\begin{equation*}
    q_{i+1} = f(q_i, \omega_i) := q_i \cdot \exp(\tau \omega_i),
\end{equation*} where $\tau>0$ is the timestep. This describes rotating an object with orientation $q_i$ about $\omega_i \in \bbR^3$ by an angle $\tau \|\omega_i\|_2$.

We set a boresight direction $y_b$, fixed in the body frame, and keep-out direction $t_o$, fixed in the inertial frame. The keep-out zone constraint is 
\begin{equation*}
    s(q) = t_o^\top (q \cdot y_b \cdot q^{-1}) - \cos(\theta_{\max}).
\end{equation*} So, $s(q) \leq 0$ implies the angle between $t_o$ and $y_o := q \cdot y_b \cdot q^{-1}$ is at least $\theta_{\max}$. For both SCvx and iSCvx, we use the penalty (\ref{eq:quat-penalty}).

For SCvx, we chose the following Euclidean trajectory cost
\begin{equation*}
    \sum_{i=0}^{N-1}\frac{1}{2}(\|q_i^k - q_d\|^2_2 + \|\omega_i^k\|^2_2) +\frac{1}{2}\|q_N^k - q_d\|_2^2 .
\end{equation*} The exact SCvx process is described in \cite[Alg. 2.1]{mao2018successive}. 

Now, we describe the iSCvx setup for the constrained attitude guidance problem. The squared geodesic distance $d_g(q,q_d)^2:= \|\log(q^{-1} \cdot q_d)\|_2^2$ is strongly geodesically convex over the domain $D:=\left\{q \in \calQ :d_g(q,q_d)< \pi/2 \right \}$ \cite{tron2012riemannian}. The stage costs are
\begin{align*}
    \phi(q_i^k, \omega_i^k) &=\frac{1}{2} \|\omega_i^k\|_2^2 + \frac{1}{2}d_g(q_i^k,q_d)^2, \\
    h(q_N^k) &= \frac{1}{2}d_g(q_N^k, q_d)^2.
\end{align*} Remark that this implies $\nabla_q \phi(q,\omega)=\nabla h(q)$, $\nabla_{q\omega}^2 \phi = 0$, and $\nabla_{qq}^2 \phi(q,\omega) = \nabla^2h(q)$. We designate (\ref{eq:traj-cost}) the \textit{geodesic} trajectory cost and restrict the domain to $D^{N + 1} \times \bbR^{3 \times (N - 1)}$.

The differentials of the system dynamics and constraints (\ref{eq:differentials}) can now be calculated as,
\small
\begin{align*}
    \bfA_i^k(\eta_i) &:= \diff_q f_{(q_i^k,\omega_i^k)}(\eta_i) = \eta_i \cdot \exp(\tau \omega_i^k),  \\ 
    \bfB_i^k(\xi_i) &:= \diff_\omega f_{(q_i^k,\omega_i^k)}(\xi_i) = \tau q_i^k \cdot \diff \exp_{\tau \omega_i^k}(\xi_i) , \\
    \bfS_i^k(\eta_i) &:= \diff s_{(q_i^k,\omega_i^k)}(\eta_i) = t_o^\top (q_i^k \cdot [(q_i^k)^{-1} \cdot \eta_i, y_b] \cdot (q_i^k)^{-1}) .
\end{align*} \normalsize
Set $z_i^k := f(q_i^k,\omega_i^k)$. Then for $v \in T_{z_i^k}\calQ$, (\ref{eq:curvature-operator}) becomes
\begin{equation*}
    \bfD_i^k(v) = q_{i+1}^k \cdot \diff \log_{(q_{i+1}^k)^{-1} \cdot z_i^k}((q_{i+1}^k)^{-1} \cdot v) .
\end{equation*}

The Riemannian gradient and Hessian of the stage costs becomes
\begin{subequations}
    \begin{align}
        \nabla h(q) &= q \cdot \log(q_d^{-1} \cdot q) \in T_q \calQ \\
        \nabla^2 h(q)[v] &= q \cdot \diff \log_{q_d^{-1} \cdot q}(q_d^{-1} \cdot v) \in T_q \calQ \\
        \nabla_\omega \phi(q,\omega) &= \omega \in T_\omega \mathfrak{q} \cong \bbR^3 \\
        \nabla_{\omega \omega}^2 \phi(q,\omega)[\xi] &= \xi \in T_\omega \mathfrak{q} \cong \bbR^3
    \end{align}
\end{subequations} We now have the tools to numerically solve Problem \ref{eq:LOCP}.


\subsection{Numerical Results} 

\begin{table}
\centering
\caption{$N=30$, $\tau=0.1$}
\begin{tabular}{|l|c|c|c|c|}
\hline
 & \multicolumn{2}{c|}{$\theta_{\max} = 10^\circ$} & \multicolumn{2}{c|}{$\theta_{\max} = 30^\circ$} \\ \cline{2-5}
 & \textbf{SCvx} & \textbf{iSCvx} & \textbf{SCvx} & \textbf{iSCvx} \\ \hline
\textbf{Ave. Iter. Count} & 40.21 & 24.89 & 45.8 & 26.8 \\ \hline
\textbf{Std. Iter. Count} & 9.23 & 2.14 & 18.28 & 1.88 \\ \hline
\textbf{Time} & 6.26 s & 4.40 s & 7.10 s & 4.70 s \\ \hline
\textbf{Ave Geo. Traj. Cost} & 4.98 & 4.73 & 6.50 & 5.67 \\ \hline
\textbf{Ave Eucl. Traj. Cost} & 7.21 & 6.91 & 9.65 & 8.17 \\ \hline
\end{tabular}
\label{table:data1}

\centering
\caption{$N=60$, $\tau=0.05$}
\begin{tabular}{|l|c|c|c|c|}
\hline
 & \multicolumn{2}{c|}{$\theta_{\max} = 10^\circ$} & \multicolumn{2}{c|}{$\theta_{\max} = 30^\circ$} \\ \cline{2-5}
 & \textbf{SCvx} & \textbf{iSCvx} & \textbf{SCvx} & \textbf{iSCvx} \\ \hline
\textbf{Ave. Iter. Count} & 67.9 & 24.75 & 65.72 & 25.65 \\ \hline
\textbf{Std. Iter. Count} & 34.86 & 2.22 & 17.02 & 2.45 \\ \hline
\textbf{Time} & 22.18 s & 9.09 s & 21.43 s & 9.37 s \\ \hline
\textbf{Ave Geo. Traj. Cost} & 9.04 & 8.96 & 10.59 & 10.50 \\ \hline
\textbf{Ave Eucl. Traj. Cost} & 11.94 & 13.34 & 13.98 & 15.42 \\ \hline
\end{tabular}
\label{table:data2}
\end{table}

\begin{figure}
    \centering
    \includegraphics[width=\columnwidth]{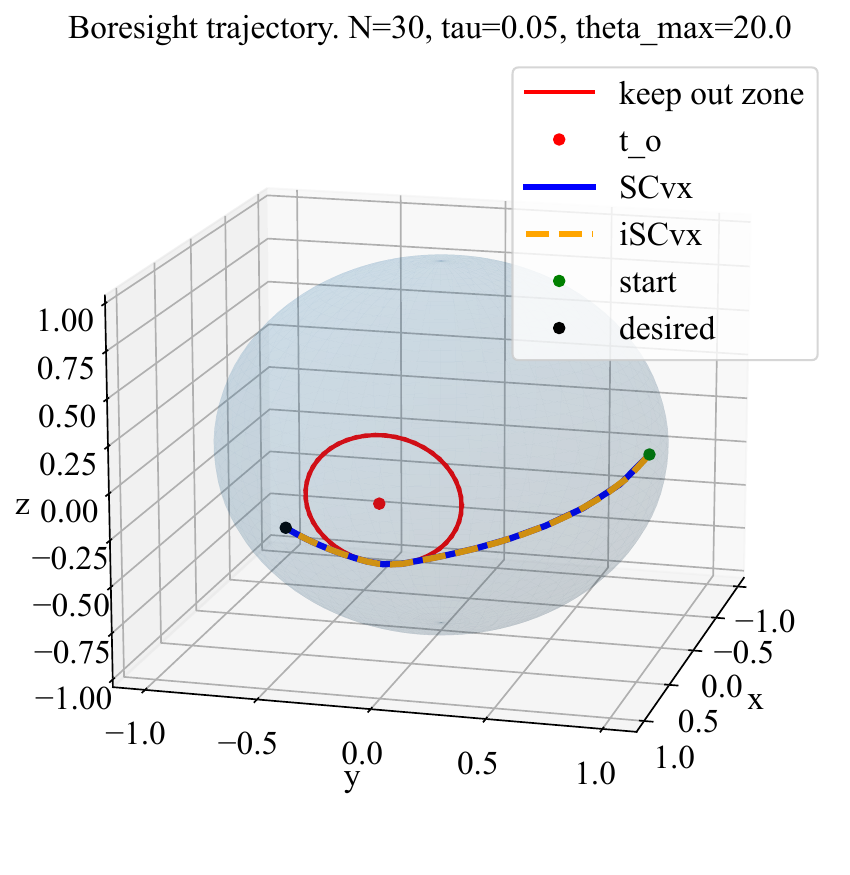}
    \caption{The iterates of SCvx and iSCvx for the constrained attitude guidance problem.}
    \label{fig:traj}
\end{figure}

The numerical simulation\footnote{Code available at github.com/Rainlabuw/intrinsic-scvx} is initiated with randomly chosen initial orientations $q_0$ and desired orientations $q_{d}$. The initial trajectory was derived through spherical linear interpolation. We implemented both SCvx and iSCvx with the termination criteria of $\epsilon_{\scriptstyle{\text{tol}}}=10^{-5}$. 
The remaining algorithm parameters are $r^1=1$, $r_l=0$, $\alpha=2$, $\beta=3.2$, $\rho_0=0$, $\rho_1=.25$, $\rho_2=.7$.

We repeated this process 100 times and recorded the average number of iterations for each algorithm. We performed the same numerical experimentation for the standard deviation of the number of iterations for each simulation, as well as the average clock time it took to solve the optimal control problem, and the average geodesic and Euclidean trajectory costs. The corresponding results are presented in Tables~\ref{table:data1} and \ref{table:data2}.

In table \ref{table:data1}, iSCvx outperforms SCvx in every category for $\theta_{\max}=30^\circ$, and all but the average Euclidean trajectory cost for $\theta_{\max}=10^\circ$. This is to be expected since iSCvx is not even attempting to optimize this cost. Paired with the fact that iSCvx achieves significantly lower average iteration count, standard deviation of iteration counts, and wall clock time, this experiment shows iSCvx has a natural advantage over SCvx for this particular optimal control problem.

The same behavior is observed in table \ref{table:data2}. Like before, while SCvx has a higher average Euclidean trajectory cost, the difference is small, especially considering iSCvx has significantly more attractive results in the other categories. Figure \ref{fig:traj} shows the optimal trajectories under both algorithms, and hence both costs. As shown, the trajectories are nearly identical.


\section{Conclusion}\label{conclusion}
This work has proposed the intrinsic successive convexification algorithm for solving non-convex trajectory optimization problems. We outlined the key ingredients of the proposed algorithm and discussed the computational benefits of the intrinsic geometry of the underlying smooth manifold during the iterates. A numerical example demonstrating the application of the so-called iSCvx to constrained attitude guidance is then discussed. Future directions include testing the algorithm on more complicated examples, namely powered descent, and generalizing this methodology to more general Sequential Convex Programming paradigms with local convergence guarantees via the coordinate-free Pontryagin maximum principle \cite[Ch. 12]{agrachev2013control}.

\addtolength{\textheight}{-12cm}   

\bibliographystyle{ieeetr}
\bibliography{references}

\begin{thebibliography}{10}

\bibitem{lee2015optimal}
U.~Lee and M.~Mesbahi, ``Optimal power descent guidance with 6-dof line of sight constraints via unit dual quaternions,'' in {\em AIAA Guidance, Navigation, and Control Conference}, p.~0319, 2015.

\bibitem{lee2012dual}
U.~Lee and M.~Mesbahi, ``Dual quaternions, rigid body mechanics, and powered-descent guidance,'' in {\em 2012 ieee 51st ieee conference on decision and control (cdc)}, pp.~3386--3391, IEEE, 2012.

\bibitem{szmuk2019successive}
M.~Szmuk, T.~Reynolds, B.~Acikmese, M.~Mesbahi, and J.~M. Carson, ``Successive convexification for 6-dof powered descent guidance with compound state-triggered constraints,'' in {\em AIAA Scitech 2019 Forum}, p.~0926, 2019.

\bibitem{mao2018successive}
Y.~Mao, M.~Szmuk, X.~Xu, and B.~A{\c{c}}ikmese, ``Successive convexification: A superlinearly convergent algorithm for non-convex optimal control problems,'' {\em arXiv preprint arXiv:1804.06539}, 2018.

\bibitem{malyuta2022convex}
D.~Malyuta, T.~P. Reynolds, M.~Szmuk, T.~Lew, R.~Bonalli, M.~Pavone, and B.~Açıkmeşe, ``Convex optimization for trajectory generation: A tutorial on generating dynamically feasible trajectories reliably and efficiently,'' {\em IEEE Control Systems Magazine}, vol.~42, no.~5, pp.~40--113, 2022.

\bibitem{bonalli2019trajectory}
R.~Bonalli, A.~Bylard, A.~Cauligi, T.~Lew, and M.~Pavone, ``Trajectory optimization on manifolds: A theoretically-guaranteed embedded sequential convex programming approach,'' {\em arXiv preprint arXiv:1905.07654}, 2019.

\bibitem{smith2014optimization}
S.~T. Smith, ``Optimization techniques on riemannian manifolds,'' {\em arXiv preprint arXiv:1407.5965}, 2014.

\bibitem{liu2020simple}
C.~Liu and N.~Boumal, ``Simple algorithms for optimization on riemannian manifolds with constraints,'' {\em Applied Mathematics \& Optimization}, vol.~82, no.~3, pp.~949--981, 2020.

\bibitem{hu2020brief}
J.~Hu, X.~Liu, Z.-W. Wen, and Y.-X. Yuan, ``A brief introduction to manifold optimization,'' {\em Journal of the Operations Research Society of China}, vol.~8, pp.~199--248, 2020.

\bibitem{bergmann2019intrinsic}
R.~Bergmann and R.~Herzog, ``Intrinsic formulation of kkt conditions and constraint qualifications on smooth manifolds,'' {\em SIAM Journal on Optimization}, vol.~29, no.~4, pp.~2423--2444, 2019.

\bibitem{saccon2013optimal}
A.~Saccon, J.~Hauser, and A.~P. Aguiar, ``Optimal control on lie groups: The projection operator approach,'' {\em IEEE Transactions on Automatic Control}, vol.~58, no.~9, pp.~2230--2245, 2013.

\bibitem{hauser1998trajectory}
J.~Hauser and D.~G. Meyer, ``The trajectory manifold of a nonlinear control system,'' in {\em Proceedings of the 37th IEEE Conference on Decision and Control (Cat. No. 98CH36171)}, vol.~1, pp.~1034--1039, IEEE, 1998.

\bibitem{boumal2023introduction}
N.~Boumal, {\em An Introduction to Optimization on Smooth Manifolds}.
\newblock Cambridge University Press, 2023.

\bibitem{lee2012smooth}
J.~M. Lee, {\em Introduction to Smooth Manifolds}.
\newblock Springer, 2012.

\bibitem{lee2018introduction}
J.~M. Lee, {\em Introduction to Riemannian Manifolds}, vol.~2.
\newblock Springer, 2018.

\bibitem{boumal2019global}
N.~Boumal, P.-A. Absil, and C.~Cartis, ``Global rates of convergence for nonconvex optimization on manifolds,'' {\em IMA Journal of Numerical Analysis}, vol.~39, no.~1, pp.~1--33, 2019.

\bibitem{truong2020unconstrained}
T.~T. Truong, ``Unconstrained optimisation on {R}iemannian manifolds,'' {\em arXiv preprint arXiv:2008.11091}, 2020.

\bibitem{lezcano2019trivializations}
M.~Lezcano~Casado, ``Trivializations for gradient-based optimization on manifolds,'' {\em Advances in Neural Information Processing Systems}, vol.~32, 2019.

\bibitem{kraisler2024output}
S.~Kraisler and M.~Mesbahi, ``Output-feedback synthesis orbit geometry: Quotient manifolds and lqg direct policy optimization,'' {\em IEEE Control Systems Letters}, vol.~8, pp.~1577--1582, 2024.

\bibitem{sola2017quaternion}
J.~Sola, ``Quaternion kinematics for the error-state kalman filter,'' {\em arXiv preprint arXiv:1711.02508}, 2017.

\bibitem{rossmann2006lie}
W.~Rossmann, {\em Lie Groups: an Introduction Through Linear Groups}, vol.~5.
\newblock OUP Oxford, 2006.

\bibitem{hairer2006geometric}
E.~Hairer, M.~Hochbruck, A.~Iserles, and C.~Lubich, ``Geometric numerical integration,'' {\em Oberwolfach Reports}, vol.~3, no.~1, pp.~805--882, 2006.

\bibitem{tron2012riemannian}
R.~Tron, B.~Afsari, and R.~Vidal, ``Riemannian consensus for manifolds with bounded curvature,'' {\em IEEE Transactions on Automatic Control}, vol.~58, no.~4, pp.~921--934, 2012.

\bibitem{agrachev2013control}
A.~A. Agrachev and Y.~Sachkov, {\em Control theory from the geometric viewpoint}, vol.~87.
\newblock Springer Science \& Business Media, 2013.

\end{thebibliography}

\end{document}